\def\C{{\mathbb C}}
\def\Q{{\mathbb Q}}
\def\R{{\mathbb R}}
\def\Z{{\mathbb Z}}
\def\Ga{{\Gamma}}
\def\H{{\mathcal{H}}}
\def\bk{{\backslash}}
\def\Hom{\text{Hom}}
\def\i{\iota}
\def\Ga{{\Gamma}}
\def\om{{\omega}}
\def\g{{\mathfrak g}}
\def\h{{\mathfrak h}}
\def\k{{\mathfrak k}}
\def\l{{\mathfrak l}}
\def\p{{\mathfrak p}}
\def\q{{\mathfrak q}}
\def\r{{\mathfrak r}}
\def\t{{\mathfrak t}}
\def\u{{\mathfrak u}}
\def\Gn{{G}}
\def\G{\underline{G}}
\def\Hn{{H}}
\def\H{\underline{H}}
\author{Mathieu Cossutta}
\address{EPFL SB IMB TAN
MA C3 604 (B\^atiment MA)
Station 8
CH-1015 Lausanne}
\email{mathieu.cossutta@ens.fr}
\title{Automorphic Lefschetz properties via $L^2$ cohomology}
\begin{document}
\renewcommand{\labelitemi}{$\bullet$}
\frontmatter
\begin{abstract}
In this paper one proves a special case of a conjecture by Nicolas Bergeron
\cite[conjecture 3.14]{B2}. This conjecture is a kind of automorphic Lefschetz property. It
relates the primitive cohomology of a locally symmetric
manifolds modeled on $U(p,q+r)$ to the primitive cohomology of some of its totally
geodesic submanifolds that are locally symmetric and modeled on $U(p,q)$. 
\end{abstract}
\subjclass{}
\maketitle

\section{Introduction}\label{sect1}

Let $\Gn$ be a connected reductive Lie group of compact center and $K$ be a maximal
compact subgroup of $\Gn$. The quotient
$$X_G=\Gn/K$$
is the symmetric space associated to $G$, let $d_G$ be its dimension. It is
naturally a Riemannian manifold on which $\Gn$ acts by isometries. For
$\Gamma$ a discrete subgroup of $G$, one defines the locally symmetric manifold
\[S_G(\Gamma)=\Gamma\bk X_G.\]
Let $\G$ be an anisotropic algebraic group defined over $\Q$ such that the
non-compact part of $\G(\R)$ is equal to $G$. Let $\rho: \G\rightarrow
\text{GL}(N)_{/\Q}$ be a closed immersion of algebraic groups. Let $n$ be a
non-negative integer, one defines
$$\Gamma(n)=\{\gamma\in G(\Q)|\ \rho(\gamma)\in \text{GL}(N,\Z)\text{ and
}\rho(\gamma)\equiv I_N\ [n]\}.$$
The group $\Gamma(n)$ is called a congruence subgroup and since $\G$ is
anisotropic, $\Gamma(n)$ is discrete and cocompact in $G$. 
One wants to study the link between the 
cohomology of  the compact  manifold
$S_G(\Gamma(n))=\Gamma(n)\bk X_G$ and the cohomology of some of its submanifolds. Let $H$ be a closed reductive subgroup of $G$ such that
\[ \Hn\cap K\text{ is a maximal compact subgroup of }\Hn.\]
This hypothesis implies that $X_H$ is a totally geodesic submanifold of $X_G$. Let $\H$ be an algebraic subgroup of $\G$
defined over $\Q$ such that the non-compact part of $H(\R)$ is equal to
$H$. One assumes furthermore that the inclusion of algebraic group
$\H\subset \G$ induces the inclusion of Lie groups $\Hn\subset \Gn$. One defines 
$$\Lambda(n)=\{\gamma\in H(\Q)|\ \rho(\gamma)\in \text{GL}(N,\Z)\text{ and
}\rho(\gamma)\equiv I_N\ [n]\}.$$
Since $\Lambda(n)=\Gamma(n)\cap H(\Q)$, there is a well defined natural natural map
$$j_{G,H,n}:S_H(\Lambda(n))\rightarrow S_G(\Gamma(n)).$$
This map is finite and according to  \cite[lemme principal et
  th\'eor\`eme 1]{B1}, there exists a finite index subgroup $\Gamma'$ of
$\Gamma(n)$ containing $\Lambda(n)$ such that the application 
$$j':S_H(\Lambda(n))\rightarrow S_G(\Gamma')$$
is an embedding. Let $i$ be a non-negative integer, we write
$$H^i(S_{\G},\C)=\varinjlim_{n}H^i(S_G(\Gamma(n)),\C).$$
The applications $j_{G,H,n}$ induce a direct image application
$$\left(j_{G,H}\right)_*:H^i(S_{\H},\C)\rightarrow H^{i+d_G-d_H}(S_{\G},\C).$$
We are interested in the case where $\Gn=U(p,q+r)$ and $\Hn=U(p,q)$
embedded in a standard way in $\Gn$. In this case the associated symmetric
spaces are Hermitian and the manifolds $S_G(\Gamma(n))$ and
$S_H(\Lambda(n))$ are projective. Using Matsushima formula (see equation \ref{eq2.3}), one can define
for two non-negative integers $i,j$ verifying $i+j\leq q$ (resp. $i+j\leq q+r$) a subspace
$$H^{ip,jp}(S_{\H})_{i,j}\  (\text{resp. } H^{ip,jp}(S_{\G})_{i,j})$$
of
$$H^{ip,jp}(S_{\H})\  (\text{resp. } H^{ip,jp}(S_{\G}))$$
(see definition \ref{defi2.3}). The cohomology classes of these subspaces are
primitive of be-degree $(ip,jp)$ (resp. $((i+r)p,(j+r)p$). They would be called highly primitive of type $(i,j)$. Except in the case where $p=1$,
being highly primitive is stronger than being primitive. For a cohomology
class, one can define its highly primitive part of type $(i,j)$ (see
defintion \ref{defi2.4}). We prove the following theorem.

\begin{theo}\label{theo0.1}
Let suppose that $p,q\geq 2$. Let $i,j$ be two non-negative integers such that
$i+j+r+1\leq q$ then the map
$$H^{pi,pj}(S_{\H},\C)_{i,j}\rightarrow H^{p(i+r),p(j+r)}(S_{\G},\C)_{(i+r),(j+r)}$$
obtained by projecting $\left(j_{G,H}\right)_*$ on the highly primitive cohomology
of type $(i+r,j+r)$ is injective.
\end{theo}

\begin{rema}
Let $i,j$ be natural numbers such that $i+j\leq q$. By a theorem of Li
(\cite[proposition 6.4]{Li1}), if we choose for $\H$ the automorphism group of an hermitian
form defined over a totally real numberfield, then if $p+q>2(i+j)$:
$$H^{ip,jp}(S_{\H},\C)_{(i,j)}\neq 0.$$
\end{rema}
Nicolas Bergeron proved in \cite[theorem 8.3]{B2} that it is enough to study a simpler problem. 

\begin{prop}\label{prop0.1}[\cite{B2} ] Let $i,j$ be two non-negative
  integers such that $i+j+r+1\leq q$ and assume that $p,q\geq 2$ then if
  for all congruence subgroups $\Lambda(n)$ the application
$$j_*: H^{ip,jp}(S_H(\Lambda(n)),\C)_{i,j}\rightarrow
H_2^{p(i+r),p(j+r)}(S_G(\Lambda(n)),\C)_{i+r,j+r}$$
obtained by projecting the direct image application in $L^2$-cohomology on
the highly primitive part of type $(i,j)$ is injective  then theorem \ref{theo0.1} is true. 
\end{prop}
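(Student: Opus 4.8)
The plan is to deduce Theorem~\ref{theo0.1} from the hypothesis by passing to the limit over the tower of congruence subgroups; this is carried out in \cite[theorem 8.3]{B2}, and we only indicate its shape.

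First one would note that the manifolds $S_G(\Gamma(n))$ and $S_H(\Lambda(n))$ are compact and K\"ahler, so that their $L^2$-cohomology coincides with their de Rham cohomology and the $L^2$ direct image agrees with the topological Gysin map attached to $j_{G,H,n}$; under this identification the hypothesis says precisely that, for every $n$, the composite of $(j_{G,H,n})_*$ with the projection onto the highly primitive part of type $(i+r,j+r)$ is injective on $H^{ip,jp}(S_H(\Lambda(n)),\C)_{i,j}$. Next one would set up the colimit formalism: $H^*(S_{\H},\C)=\varinjlim_n H^*(S_H(\Lambda(n)),\C)$ and likewise for $S_{\G}$, the transition maps being pullback along the finite coverings $S_H(\Lambda(m))\to S_H(\Lambda(n))$ for $n\mid m$; over $\C$ these transition maps are injective (transfer, i.e.\ averaging over the deck group), and the subspaces $H^{ip,jp}(-)_{i,j}$ together with the projections onto the highly primitive part are compatible with the tower, as one sees from the Matsushima / $(\g,K)$-cohomology description used to define them (equation~\ref{eq2.3} and Definitions~\ref{defi2.3}--\ref{defi2.4}). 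Given a class $\alpha\in H^{ip,jp}(S_{\H},\C)_{i,j}$ killed by the highly-primitive-projected $\left(j_{G,H}\right)_*$, one would pick a level $n$ and a representative $\alpha_n\in H^{ip,jp}(S_H(\Lambda(n)),\C)_{i,j}$; vanishing in the colimit means that the pullback of $(j_{G,H,n})_*\alpha_n$ to $H^{p(i+r),p(j+r)}(S_G(\Gamma(m)),\C)$ becomes zero at some deeper level $m$.

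The step that requires work --- and the reason this is a theorem rather than a formality --- is the behaviour of the direct image under the passage to a deeper level. The square of manifolds relating $j_{G,H,n}$ and $j_{G,H,m}$ is not Cartesian, so the pullback of $(j_{G,H,n})_*\alpha_n$ to level $m$ is not simply $(j_{G,H,m})_*$ of the pullback $\alpha_m$ of $\alpha_n$, but a sum $\sum_\gamma (j_\gamma)_*(\gamma\cdot\alpha_m)$ indexed by a double-coset set, in which the $\gamma\cdot\alpha_m$ are the Hecke translates of $\alpha_m$ and the $j_\gamma$ are the restrictions to $S_H(\Lambda(m))$ of the various $\Gamma(m)$-translates of the totally geodesic submanifold. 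Here one would invoke \cite[lemme principal et th\'eor\`eme 1]{B1}: for $m$ large these translated submanifolds are embedded and pairwise disjoint in $S_G(\Gamma(m))$ (possibly after refining $\Gamma(m)$ to the finite-index subgroup $\Gamma'$ produced there), so restricting the displayed identity to the connected component carrying $j'(S_H(\Lambda(m)))$ isolates the term $(j')_*\alpha_m$ and shows that its highly-primitive projection vanishes; the hypothesis at this level, where $j'$ is an embedding, then forces $\alpha_m=0$, and injectivity of the transition map $H^{ip,jp}(S_H(\Lambda(n)),\C)\to H^{ip,jp}(S_H(\Lambda(m)),\C)$ gives $\alpha_n=0$, hence $\alpha=0$. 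The hard part is precisely this geometric input of \cite{B1} --- the separation of the translated totally geodesic submanifolds at deep level --- together with checking that it is compatible with the highly primitive projections and with the normalizations that make $\left(j_{G,H}\right)_*$ well defined on the colimit (the unnormalized finite-level Gysin maps fail to be compatible with the transition maps, since lifts of a cycle to $S_H(\Lambda(m))$ and lifts of its image to $S_G(\Gamma(m))$ occur with different multiplicities); carrying this out is the content of \cite[theorem 8.3]{B2}.
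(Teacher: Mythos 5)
Your sketch proves a different implication from the one stated. In Proposition~\ref{prop0.1} the hypothesis is injectivity of
$$j_*: H^{ip,jp}(S_H(\Lambda(n)),\C)_{i,j}\rightarrow H_2^{p(i+r),p(j+r)}(S_G(\Lambda(n)),\C)_{i+r,j+r},$$
where the target is the reduced $L^2$-cohomology of $S_G(\Lambda(n))=\Lambda(n)\backslash X_G$, the quotient of $X_G$ by a congruence lattice of $\Hn$ only: this is the ``normal bundle'' manifold of Theorem~\ref{theo0.2}, which is non-compact (and of infinite volume) as soon as $r\neq 0$ --- see the remark immediately following the proposition. You have instead read the hypothesis as injectivity of the finite-level Gysin maps $(j_{G,H,n})_*$ into the cohomology of the compact quotients $S_G(\Gamma(n))$, which is (essentially) the finite-level form of the \emph{conclusion}, Theorem~\ref{theo0.1}. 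With that misreading the deduction becomes the nearly formal colimit argument you describe (transfer, compatibility of Gysin maps, separation of translated submanifolds via \cite{B1}), and indeed nothing in your sketch uses $p,q\geq 2$ or the strict inequality $i+j+r+1\leq q$ --- whereas the paper states explicitly that these hypotheses come from the proof of the proposition.

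The actual content of \cite[theorem 8.3]{B2} is the passage from the $L^2$ statement on $\Lambda(n)\backslash X_G$ to the Lefschetz statement on the tower of compact quotients $\Gamma(n)\backslash X_G$. That step is not a diagram chase over the tower: it requires relating the $L^2$-harmonic form $j_*\eta$ on $\Lambda(n)\backslash X_G$ to genuine automorphic cohomology classes at deep congruence level, which Bergeron does with automorphic-spectral input (restriction of automorphic representations in the spirit of Burger--Sarnak and isolation properties of the cohomological representations $A_{i,j}$ in the automorphic dual); this is exactly where the constraints $p,q\geq 2$ and $i+j+r+1\leq q$ enter. The paper under review gives no proof of the proposition --- it simply cites \cite[theorem 8.3]{B2} --- so a citation would have been acceptable, but your reconstruction of the cited argument does not address the implication actually asserted.
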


\begin{rema}
We consider reduced $L^2$-cohomology. This means that for a Riemmannian
manifold $X$ and a non-negative integer $R$, $H_2^R(X)$ is the space of
$L^2$ harmonic differential forms of degree $R$ on $X$. If $r\neq 0$ the manifold $S_G(\Lambda(n))$ is non-compact then the $L^2$-cohomology can be different from the usual cohomology.
\end{rema}

 The hypothesis $i+j+r+1\leq q$ and $p,q\geq 2$ come from the proof of the
 proposition \ref{prop0.1}. The main theorem of this note is the following.

\begin{theo}\label{theo0.2} Let $i,j$ be natural integers such that
  $i+j+r\leq q$. Let  $\Lambda$ be a cocompact subgroup of $H$. The map
\begin{equation}\label{eq0.1} j_*: H^{ip,jp}(S_H(\Lambda),\C)_{i,j)}\rightarrow
H_2^{p(i+r),p(j+r)}(M_\Lambda,\C)_{i+r,j+r},
\end{equation}
obtained by projecting the direct image application on the highly primitive cohomology of type
$(i+r,j+r)$ is injective.
\end{theo}
The case where $i=j=0$, $H=U(1,1)$ and $G=U(2,1)$ was treated by Kudla and
Millson in \cite{KM2}. The general case where $i=j=0$ was done by Nicolas Bergeron in \cite[th\'eor\`eme
  3.4]{B2}. It is based on the article \cite{TW} of Tong and Wang. The
proof of theorem \ref{theo0.2} goes as follows (in fact some of the basic
ideas were developped by Kudla and Millson in the paper \cite{KM1} in the
case where $H=O(1,1)$ and $G=O(2,1)$). Let $\eta$ be
a highly primitive cohomology class of type $(i,j)$ on
$S_H(\Lambda)$. Since $S_G(\Lambda)$ can be seen as the
normal bundle of $S_H(\Lambda)$ in $S_G(\Gamma)$ (if $\Lambda=\Gamma\cap\Hn$), there exits a projection
$$p:S_G(\Lambda)\rightarrow S_H(\Lambda).$$
One can represent $j_*\eta$ (in $H^*(S_G(\Lambda))$) as the closed differential form
$$j_*\eta=p^*\eta\wedge[S_H(\Lambda)]$$
where $[S_H(\Lambda)]$  is a choice of differential form representing the dual class of $S_H(\Lambda)$ in
$S_G(\Lambda)$. One wants to choose $p^*\eta$ and $[S_H(\Lambda)]$ such that
$j_*\eta$ is harmonic and square-integrable. The representation theory of
$U(p,q)$ and $U(p,q+r)$ is used to make a choice that works. In the first part of
the note  some theorems of Nicolas Bergeron on cohomological
representations of $U(a,b)$ are recalled and in the second part the
theorem \ref{theo0.2} is proven. Finally, we remark that in order to generalize our
main results to other type of highly primitive cohomology or to other
groups one should prove
some theorems on restriction and tensor products of cohomological
representions generalizing theorem \ref{theoVI3} and \ref{theoVI4}.

\section{Representation theory}\label{sect2}

\subsection{Cohomological representations}
In this part $G$ will be equal to $U(a,b)$ viewed as the group of matrices
$$\left\{M\in M(n,\C)|\ \overline{^tM}I_{a,b}M=I_{a,b}\right\}$$
where $I_{a,b}=\text{diag}(I_a,-I_b)$. One can choose as a maximal compact
subroup of $G$,
$K=U(a)\times U(b)$ diagonaly embedded.
\begin{rema}
One uses the subscript $0$ for real Lie
algebra and no subscript for complex one. 
A compact Cartan algebra of both $\k_0$ and $\g_0$ is
$$\mathfrak{t}_0=
\left\{\text{diag}\left(x_1,\dots,x_a;y_1,\dots,y_b\right)|\ x_i,y_j\in\imath\R\right\}.$$
\end{rema}

Let $\p_0$ be the orthogonal complement for the Killing form of $\k_0$ in
$\g_0$. One has
$$\p=\left\{\left(
\begin{array}{cc}
0&A\\
B&0
\end{array}
\right)|\ A,\ ^tB\in M_{a,b}(\C)\right\}.$$
Since $\p_0$ is the tangent space at the identity of $X_G$, the Killing form (which is
positive and invariant by $K$ on $\p_0$) defines a Riemannian structure
on $X_G$. The group $G$ acts on it by isometries. Let $\Delta(\g,\t)$ be
the set of roots of $\t$ in $\g$ and $\mathfrak{g}^\tau$ be the eigenspace associated to a root
$\tau$. 

\begin{rema}\label{rema2.1} 
Since $K$ is compact, for all $H\in\imath\t_0$ and $\tau\in \Delta(\g,\t)$
the number $\tau(H)$ is real.
\end{rema}

Let $H\in\imath\mathfrak{t}_0$. One defines:
$$
\q(H)=\oplus_{\substack{\tau\in\Delta(\mathfrak{g},\mathfrak{t})\\\tau(H)\geq
0}}\ \mathfrak{g}^\tau,\ 
\l(H)=\oplus_{\substack{\tau\in\Delta(\mathfrak{g},\mathfrak{t})\\\tau(H)=0
}}\ \mathfrak{g}^\tau\text{ and }\ 
\u(H)=\oplus_{\substack{\tau\in\Delta(\mathfrak{g},\mathfrak{t})\\\tau(H)>0
}}\ \mathfrak{g}^\tau.
$$
Then $\q(H)$ is a parabolic algebra of $\g$ and $\q(H)=\l(H)\oplus \u(H)$ is a Levi
decomposition. Since $\l(H)$ is defined over $\R$, there exists a
well defined reductive subgroup $L(H)$ of $G$ of complexified Lie algebra $\l(H)$.
\begin{defi}\label{defi2.1}
A pair $(\q(H),L(H))$ defined by an element $H\in\imath\t_0$ is called a
theta stable parabolic algebra.
\end{defi}
Let $(\q,L)$ be a parabolic theta stable algebra of $\g$. Let $\u$ be the
radical unipotent of $\g$. One defines
$R(\mathfrak{q})=\dim\mathfrak{p}\cap\mathfrak{u}$, called the
cohomological degree of $\q$. According to Vogan and Zuckerman (\cite[theorem 2.5]{VZ}), $\bigwedge^{R(\mathfrak{q})}\left(\mathfrak{p}\cap\mathfrak{u}\right)$ is a
highest weight vector in $\bigwedge^{R(\q)}\mathfrak{p}$. Let
$V(\mathfrak{q})$ be the irreducible $K$-submodule of
$\bigwedge^{R(\mathfrak{q})}\mathfrak{p}$ generated by this vector. These
modules play an important role in the study of the cohomology of locally
symmetric spaces. One can classify them up to isomorphism. This is done for example by Bergeron in \cite{B2}. Clearly, if two theta stable parabolic algebras are
$K$-conjugated they generate the same module. So up to $K$-conjugation, we can
assume that $\q$ is defined by an element
$$H=(x_1,\dots,x_a)\otimes( y_1,\dots,y_b)\in \R^{a}\times\R^b$$
with
$$x_1\geq\dots\geq x_a\text{ and }y_1\geq\dots\geq y_b.$$
Such an element will be called dominant. One can associate to a dominant
element of $\imath\t_0$ two partitions. Recall that a partition is
a decreasing sequence $\alpha$ of natural integers
$\alpha_1,\dots,\alpha_l\geq 0$. The Young diagram of $\alpha$, also written $\alpha$, is obtained by adding from top to bottom rows of $\alpha_i$ squares all of
the same shape. Let $\alpha$ and $\beta$ be partitions such that the diagram of $\alpha$
is included in the diagram of $\beta$, one writes this relation $\alpha\subset\beta$. We
will also write $\beta\bk\alpha$ for the
complementary of the diagram of $\alpha$ in the diagram of $\beta$. It is
a skew diagram. One writes $a\times b$ or $b^a$ for the partition
$$(\underbrace{b,\dots,b}_{a\text{ times}}).$$ 
Let $H\in\imath\t_0$ be dominant. One associates to $H$ two partitions $\alpha\subset\beta\subset a\times b$ defined by:
\[
\alpha(i)=|\{j|x_i> y_{b+1-j}\}|\text{ and }\beta(i)= |\{j|x_i\geq
y_{b+1-j}\}|.
\]

\begin{prop}\label{prop2.1}  The following three points give the classification of modules $V(\q)$:
\begin{itemize}
\item let $\q$ be a parabolic theta stable algebra and $\alpha\subset \beta$ be the associated partitions, then $(\beta\bk\alpha)$ is an union of squares which intersect only on verteces.

\item we have $V(\q)=V(\q')$ in $\bigwedge^*\p$ if and only if $(\q,L)$ and $(\q',L')$ have the same associated
  partitions.
\item If $\alpha\subset\beta\subset a\times b$ is a pair of partitions verifying the
  condition of the first point there exists a parabolic theta stable algebra $\q$
  with the associated partition $(\alpha,\beta)$. Such a pair will be called compatible.
\end{itemize}
\end{prop}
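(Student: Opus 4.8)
The strategy is to reduce everything to elementary combinatorics of the interleaving of the two weakly decreasing sequences $x_1\geq\cdots\geq x_a$ and $y_1\geq\cdots\geq y_b$ attached to a dominant $H$; the only representation-theoretic inputs are the theorem of Vogan and Zuckerman quoted above and the Cauchy decomposition of $\bigwedge^*\p^\pm$ as a module over $K=U(a)\times U(b)$. A complete treatment is in \cite{B2}; here is the line of argument. Write $\p=\p^+\oplus\p^-$ for the decomposition into the upper-right and the lower-left off-diagonal blocks, so that $\p^+\cong\C^a\otimes(\C^b)^*$ and $\p^-\cong(\C^a)^*\otimes\C^b$ as $K$-modules. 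The root of $\t$ acting on the $(i,j)$-entry of $\p^+$, resp.\ of $\p^-$, takes the value $x_i-y_j$, resp.\ $y_j-x_i$, on $H$; hence $\p^+\cap\u$ is the coordinate subspace of $\p^+$ supported on $\{(i,j):x_i>y_j\}$ and $\p^-\cap\u$ the one supported on $\{(i,j):y_j>x_i\}=\{(i,j):\neg(x_i\geq y_j)\}$. Since the $x_i$ and the $y_j$ are separately weakly decreasing, reversing the order of the $b$ columns --- an operation induced by an element of the $U(b)$-factor of $K$ --- carries the first set onto the Young diagram of $\alpha$ and the second onto the complement $(a\times b)\setminus\beta$. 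Thus, up to $K$, the pair $(\p^+\cap\u,\p^-\cap\u)$ depends only on $(\alpha,\beta)$; in particular $R(\q)=|\alpha|+ab-|\beta|$ depends only on $(\alpha,\beta)$, and if $k\in K$ carries the data of $\q$ onto that of $\q'$ then $V(\q')=\mathrm{Ad}(k)\,V(\q)=V(\q)$ since $V(\q)$ is $K$-stable. This proves the ``if'' half of the second point.

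For the first point, let $z_1>\cdots>z_m$ be the distinct values occurring among the $x_i$ and $y_j$, with respective multiplicities $p_k$ among the $x$'s and $q_k$ among the $y$'s. If $x_i=z_k$ then $\alpha(i)=\sum_{l>k}q_l$ and $\beta(i)=\sum_{l\geq k}q_l$; in particular all rows with $x_i=z_k$ carry the same pair $(\alpha(i),\beta(i))$ and $\beta(i)-\alpha(i)=q_k$. Hence $\beta\setminus\alpha$ is the disjoint union, over those $k$ with $p_kq_k\neq0$, of a $p_k\times q_k$ rectangular block, the block of a value $z_k$ lying strictly below and to the left of the block of any larger value, the offsets being governed by the multiplicities of the intermediate values. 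Inspecting these offsets shows that two distinct blocks are never edge-adjacent: they meet at most in a vertex. This is the first point, with ``square'' understood as ``rectangular block''.

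By the theorem of Vogan and Zuckerman, $V(\q)$ is the irreducible $K$-module generated by the highest weight vector $\bigwedge^{R(\q)}(\p\cap\u)$, hence is determined by its highest weight; by the first paragraph this weight is $\big(\alpha(i)+\beta(i)-b\big)_{1\leq i\leq a}$ on the $U(a)$-factor and $\big(a-\widetilde\alpha(j)-\widetilde\beta(j)\big)_{1\leq j\leq b}$, read in reversed order, on the $U(b)$-factor, where $\widetilde\lambda$ denotes the conjugate partition of $\lambda$; equivalently, in the Cauchy decompositions of $\bigwedge^*\p^+$ and $\bigwedge^*\p^-$ it is the summand indexed by $\alpha$, respectively by $(a\times b)\setminus\beta$. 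Using the structure constraint of the previous paragraph one checks that the two sequences $\big(\alpha(i)+\beta(i)\big)_i$ and $\big(\widetilde\alpha(j)+\widetilde\beta(j)\big)_j$ together recover $\alpha$ and $\beta$; this gives the ``only if'' half of the second point. Finally, for the third point, starting from a compatible pair $\alpha\subset\beta\subset a\times b$ one decomposes $\beta\setminus\alpha$ into its maximal rectangular blocks, assigns to the rows and to the columns meeting a common block a common value, and fills in the remaining rows and columns with values strictly interlacing these in the unique way prescribed by $\alpha$ and $\beta$; this produces a dominant $H\in\imath\t_0$ whose associated partitions are $(\alpha,\beta)$.

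The genuinely delicate steps are the two combinatorial verifications: that the rectangular blocks produced by an arbitrary dominant $H$ are always in this ``staircase'' position, and that an arbitrary compatible pair can be realized by a consistent choice of interlacing real values. Granting these, the representation-theoretic content follows at once from Vogan and Zuckerman.
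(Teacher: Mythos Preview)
The paper does not give its own proof of this proposition: it is stated as a classification ``done for example by Bergeron in \cite{B2}'' and followed immediately by a definition. So there is no in-paper argument to compare against; your write-up is effectively supplying what the paper outsourced to \cite{B2}, and your line of attack --- identify $\p^{\pm}\cap\u$ as coordinate subspaces governed by the interleaving of the $x_i$ and $y_j$, read off $\alpha$ and $\beta$ from the row counts, and invoke Vogan--Zuckerman for the highest weight --- is the natural one and matches what one finds in Bergeron's memoir.

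Two remarks on the argument itself. First, for the ``only if'' half of the second point you compute the highest weight $\big(\alpha(i)+\beta(i)-b\big)_i$, $\big(a-\widetilde\alpha(j)-\widetilde\beta(j)\big)_j$ and then assert that, under the compatibility constraint, these two sequences determine $(\alpha,\beta)$. This is true but is another combinatorial check of the same flavour as the two you flag at the end, and you should list it among the ``delicate steps'' or replace it by the following cleaner route: for a \emph{dominant} $H$ the positive root vectors of the standard Borel of $K$ preserve $\p\cap\u$ (monotonicity of the $x_i$ and $y_j$), hence $\bigwedge^{R(\q)}(\p\cap\u)$ is the highest weight line of $V(\q)$ for that fixed Borel; so $V(\q)=V(\q')$ forces $\bigwedge^{R(\q)}(\p\cap\u)=\bigwedge^{R(\q')}(\p'\cap\u')$ as lines, whence $\p\cap\u=\p'\cap\u'$ and thus $(\alpha,\beta)=(\alpha',\beta')$. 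This avoids inverting the highest-weight formula. Second, your parenthetical reading of ``squares'' as ``rectangular blocks'' is the intended one (the connected components of $\beta\setminus\alpha$ are $p_k\times q_k$ rectangles, meeting only at corners), and your staircase analysis via the common values $z_k$ and multiplicities $(p_k,q_k)$ is correct.
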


\begin{defi} Let $(\alpha,\beta)$ be a pair of compatible partitions
  included in $a\times b$ and
  let $\q$ be a parabolic theta stable algebra of associated partitions
  $(\alpha,\beta)$. We will write $V_{\alpha,\beta}^{U(a,b)}=V(\q)$. This doesn't
  depend on the choice of $\q$ by the second point of the proposition \ref{prop2.1}.
\end{defi}

Parabolic theta stable algebras are related to representation theory of
$U(a,b)$ by cohomological induction (see \cite{KV} for definitions). The following theorem was
proven by Vogan and Zuckerman in \cite[theorem 2.5]{VZ}.

\begin{theo}\label{theo2.0} Let $\q$ be a parabolic theta stable
  algebra. There exists a unique irreducible and unitary representation of $G$, which will
be denoted  $A_\mathfrak{q}$, verifying the two following
properties :
\begin{itemize}
\item the infinitesimal character of $A_\q$ is the infinitesimal
  character of the trivial representation, 
\item the $K$-type $V(\q)$ appears in $A_\q$.
\end{itemize}
\end{theo}

\begin{defi} Let $(\alpha,\beta)$ be a pair of compatible partitions of
  $a\times b$. By
  unicity in \ref{theo2.0}, there exists a unique unitary representation
   that contains $V_{\alpha,\beta}^{U(a,b)}$ and of trivial
  infinitesimal character. One can write this representation
  $A_{\alpha,\beta}^{U(a,b)}$. Furthermore if $i,j$ are two non-negative integers
  such that $i+j\leq b$, one defines
 $V_{i,j}^{U(a,b)}=V_{(i^p,(b-j)^p)}$ and $A_{i,j}^{U(a,b)}=A_{(i^p,(b-j)^p)}$.

\end{defi}

\subsection{Some results of Nicolas Bergeron on $\bigwedge^*\p$}\label{ssect1.2}
The results of this section are mainly due to Nicolas Bergeron. One uses
notations of the introduction. For example, we have that $G=U(p,q+r)$ and $H=U(p,q)$.

\begin{theo}[\cite{B2} lemma 2.3 and theorem 5.2]\label{theoVI3} 
Let $i,j$ be two non-negative integers such that $i+j+r\leq q$. One has the two following points:
\begin{itemize}
\item  the image
of the $K\cap \Hn$- equivariant inclusion
$$V^H_{i,j}\subset \bigwedge^{ip,jp}\p\cap\h\subset \bigwedge^{ip,jp}\p$$
 is contained in $V^G_{i,j}$ and
\item the
$K\cap\Hn$-equivariant inclusion $V^H_{i,j}\subset V^G_{i,j}$ 
can be lifted to an $\Hn$-equivariant inclusion
$$A_{i,j}^H\rightarrow A_{i,j}^G.$$
\end{itemize}
\end{theo}

This restriction theorem can be proved using the archimedean theta correspondance and the theory of seesaw pairs
\cite{K1}.

Let $\r$ be the orthogonal complement of $\h$ in $\g$ for the Killing
form. It is an $\h$-module and one has the decomposition
$$\p=\h\cap \p\oplus \q\cap\r.$$
Let $\om_H\in\bigwedge^{2pr}\p\cap \r$, be the vector obtained by taking
the exterior product of a direct orthonormal basis of
$\p_0\cap\r_0$. According to Tong and Wang, one has the following lemma.

\begin{lemm}[\cite{TW} Proposition 4.6]\label{lemmVI2}
The image $\om_H^\text{prim}$ of $\om_H$ by the projection
$$\bigwedge^{2pr}\p\cap \r\rightarrow \bigwedge^{2pr}\p\rightarrow
 V^G_{r,r}$$ 
is non zero.
\end{lemm}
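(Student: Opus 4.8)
\emph{Proof plan.} My plan is to make every object concrete as a representation of $K=U(p)\times U(q+r)$ and then reduce to an elementary fact about Cartan components of tensor products. First I would fix notation: write $\p=\p^+\oplus\p^-$ for the $(1,0)$/$(0,1)$ decomposition, so that as $K$-modules $\p^+\cong\C^p\otimes(\C^{q+r})^*$ and $\p^-\cong(\p^+)^*$, and $\bigwedge^{pr,pr}\p=\bigwedge^{pr}\p^+\otimes\bigwedge^{pr}\p^-$. In the block decomposition of $\g$ into blocks of sizes $(p,q,r)$ adapted to $\H\subset\G$ one sees at once that $\p\cap\r$ is the sum of the $(1,3)$- and $(3,1)$-blocks; hence $(\p\cap\r)^+\cong\C^p\otimes(\C^r)^*$ is the block pairing $\C^p$ with the last $r$ coordinates, and $\om_H$ is, up to a nonzero scalar, $\om_H^+\wedge\om_H^-$ with $\om_H^\pm$ a generator of $\bigwedge^{pr}(\p\cap\r)^\pm$. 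I would also recall the $GL_p\times GL_{q+r}$-equivariant decomposition $\bigwedge^{pr}\p^+=\bigoplus_{\lambda}\S_\lambda(\C^p)\otimes\S_{\lambda'}((\C^{q+r})^*)$ (sum over partitions $\lambda$ inside the $p\times(q+r)$ rectangle), and the analogous one for $\p^-$.

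Next I would observe that $\om_H^+$ is a $K$-highest weight vector and $\om_H^-$ a $K$-lowest weight vector: a positive root vector of $\k$ sends each basis vector $e_i\otimes e_j^*$ occurring in $\om_H^+$ (with $1\le i\le p$ and $j$ among the last $r$ indices) either to $0$ or to another such basis vector already occurring in the wedge, so it annihilates $\om_H^+$; the computation for $\om_H^-$ and lowering operators is identical. Reading off weights, $\om_H^+$ is then the highest weight vector of the constituent $\S_{r^p}(\C^p)\otimes\S_{p^r}((\C^{q+r})^*)$ of $\bigwedge^{pr}\p^+$ (the summand $\lambda=r^p$), and $\om_H^-$ is the lowest weight vector of $\S_{r^p}((\C^p)^*)\otimes\S_{p^r}(\C^{q+r})$ in $\bigwedge^{pr}\p^-$. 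Calling these two irreducible $K$-modules $V$ and $W$, we get $\om_H=v_V^{\mathrm{hw}}\otimes v_W^{\mathrm{lw}}\in V\otimes W\subset\bigwedge^{pr,pr}\p$, where $v_V^{\mathrm{hw}}$ is a highest weight vector of $V$ and $v_W^{\mathrm{lw}}$ a lowest weight vector of $W$.

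Then I would identify $V^G_{r,r}$. Its theta stable parabolic $\q$ has associated partitions $(r^p,q^p)$ (recall $r\le q$); taking the dominant $H\in\imath\t_0$ realizing them and summing the roots occurring in $\p\cap\u$ — i.e.\ computing the weight of $\bigwedge^{R(\q)}(\p\cap\u)$ — one finds that the highest weight of $V(\q)$ has trivial $U(p)$-part and equals, as a $U(q+r)$-weight, $(p,\dots,p,0,\dots,0,-p,\dots,-p)$ (with $r$ parts $p$, then $q-r$ parts $0$, then $r$ parts $-p$). This is precisely the highest weight of the Cartan (highest weight) component of $V\otimes W$, which occurs in $V\otimes W$ with multiplicity one. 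Hence $V^G_{r,r}$ \emph{is} the Cartan component of $V\otimes W$, $\om_H$ lies in $V\otimes W$, and the projection $\bigwedge^{2pr}\p\to V^G_{r,r}$ restricted to $V\otimes W$ is the projection onto the Cartan component. So the Lemma is reduced to the assertion that $v_V^{\mathrm{hw}}\otimes v_W^{\mathrm{lw}}$ has nonzero image under the projection of $V\otimes W$ onto its Cartan component.

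Finally I would prove this last assertion, which holds for any two irreducible representations $V,W$ of a connected reductive group. Every raising operator annihilates $v_V^{\mathrm{hw}}$, so $\U(\n^+)$ acts on $v_V^{\mathrm{hw}}\otimes v_W^{\mathrm{lw}}$ through the $W$-factor only; choosing $\xi\in\U(\n^+)$ with $\xi\cdot v_W^{\mathrm{lw}}=v_W^{\mathrm{hw}}$ (possible since $v_W^{\mathrm{lw}}$ generates $W$ under raising operators), one gets $\xi\cdot(v_V^{\mathrm{hw}}\otimes v_W^{\mathrm{lw}})=v_V^{\mathrm{hw}}\otimes v_W^{\mathrm{hw}}$, the highest weight vector of the Cartan component. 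If $P$ is the equivariant projection onto that component, then $\xi\cdot P(v_V^{\mathrm{hw}}\otimes v_W^{\mathrm{lw}})=P(\xi\cdot(v_V^{\mathrm{hw}}\otimes v_W^{\mathrm{lw}}))=v_V^{\mathrm{hw}}\otimes v_W^{\mathrm{hw}}\neq0$, whence $P(v_V^{\mathrm{hw}}\otimes v_W^{\mathrm{lw}})\neq0$, as required. The only real work is in the third step: correctly pinning down the theta stable parabolic attached to $V^G_{r,r}$ and checking that its $K$-type is exactly the Cartan component of $V\otimes W$; once that identification is in place the conclusion is immediate.
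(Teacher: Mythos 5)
The paper itself gives no proof of this lemma: it is quoted from Tong--Wang \cite{TW}, Proposition 4.6. So what you propose is a self-contained alternative, and most of it is correct: the identification of $\p\cap\r$ with the $(1,3)$ and $(3,1)$ blocks, the verification that $\om_H^{+}$ (resp. $\om_H^{-}$) is a highest (resp. lowest) weight vector of the summands $V$ and $W$ you name, the computation that the highest weight of $V^G_{r,r}$ is $(0^p;\,p^r,0^{q-r},(-p)^r)$, and the closing argument that a highest-weight vector tensored with a lowest-weight vector has nonzero projection onto the Cartan component are all sound.

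The step that does not follow as written is ``Hence $V^G_{r,r}$ \emph{is} the Cartan component of $V\otimes W$.'' Matching highest weights, plus multiplicity one \emph{inside $V\otimes W$}, only yields an abstract isomorphism of $K$-modules; your argument needs that the Cartan component $C\subset V\otimes W\subset\bigwedge^{pr}\p^+\otimes\bigwedge^{pr}\p^-$ and the Vogan--Zuckerman subspace $V^G_{r,r}$ are the \emph{same} subspace of $\bigwedge^{2pr}\p$ (or at least not orthogonal). If that $K$-type occurred in $\bigwedge^{pr}\p^+\otimes\bigwedge^{pr}\p^-$ with multiplicity at least two, the orthogonal projection onto $V^G_{r,r}$ could a priori annihilate $C$, and then proving that $\om_H$ has nonzero component in $C$ would give nothing. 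The gap is real but closable with material you already have: take the dominant element $H=(0,\dots,0;\,1,\dots,1,0,\dots,0,-1,\dots,-1)\in\imath\t_0$ (with $r$ ones and $r$ minus ones), which realizes the partitions $(r^p,q^p)$. Then $\p^+\cap\u(H)$ is exactly $(\p\cap\r)^+$, while $\p^-\cap\u(H)$ is the block pairing $(\C^p)^*$ with the \emph{first} $r$ coordinates of $\C^{q+r}$, and its top exterior power is precisely the highest weight vector of $W$ (the same computation you did for $\om_H^{\pm}$). Hence the Vogan--Zuckerman generator $\bigwedge^{R(\q)}(\p\cap\u)$ equals the wedge of $\om_H^{+}$ with the highest weight vector of $W$, which is exactly the highest weight vector of $C$; since $V^G_{r,r}$ and $C$ are the irreducible $K$-submodules generated by this one vector, they coincide as subspaces, and the rest of your argument goes through verbatim. (Alternatively, one can check by a dominance-order argument that this $K$-type does occur in $\bigwedge^{pr}\p^+\otimes\bigwedge^{pr}\p^-$ with multiplicity one, but the identification of vectors is quicker and uses only what you have already computed.)
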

 The theorem \ref{theoVI3} was a theorem on restriction to $H$ of
some $G$-modules. In \cite{B2} is also studied the problem of the
restriction to the diagonal of a tensor product of cohomological
representations.
\begin{lemm}[\cite{B2} lemma 3.15]\label{lemmVI3} 
The image of $V^H_{i,j}\otimes \bigwedge^{2rp}\p\cap\r$, by the inclusion
$$\bigwedge^{ip,jp}\p\cap\h\otimes \bigwedge^{rp,rp}\p\cap
\r\subset
\bigwedge^{(i+r)p,(j+r)p}\p,$$
  is contained in $V^G_{i+r,j+r}.$
\end{lemm}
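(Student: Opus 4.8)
The plan is to reduce the statement to the two results already available, namely Theorem \ref{theoVI3} (first point) and Lemma \ref{lemmVI2}, together with the multiplicativity of the $K$-module maps $V(\q)\hookrightarrow\bigwedge^*\p$ under wedge product. First I would observe that the map in question factors as
\[
V^H_{i,j}\otimes\textstyle\bigwedge^{rp,rp}\p\cap\r\ \xrightarrow{\ \wedge\ }\ \bigwedge^{(i+r)p,(j+r)p}\p,
\]
and that by Theorem \ref{theoVI3} the first tensor factor, viewed inside $\bigwedge^{ip,jp}\p\cap\h$, already lands in $V^G_{i,j}$. The generator of $V^H_{i,j}$ is (up to $K\cap\Hn$-conjugacy) the highest weight vector $\bigwedge^{R(\q_H)}(\p\cap\u_H)$ attached to a theta stable parabolic $\q_H$ of $\h$ whose partition datum is $(i^p,(q-j)^p)$; similarly $\om_H$ is essentially the top wedge of $\p_0\cap\r_0$. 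The key point is to exhibit a single theta stable parabolic $\q_G$ of $\g$, built by "stacking" $\q_H$ and the defining element of $\om_H$ inside $\imath\t_0^G$, whose partition datum is exactly $(i+r,j+r)$, i.e. $((i+r)^p,(q+r-(j+r))^p)=((i+r)^p,(q-j)^p)$, and such that $\bigwedge^{R(\q_G)}(\p\cap\u_G)$ equals (a nonzero multiple of) the wedge of the generator of $V^H_{i,j}$ with $\om_H$.

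Concretely, I would choose the dominant element $H_G\in\imath\t_0^G$ for $\g=\mathfrak{u}(p,q+r)$ to be the one whose $x$-coordinates match those defining $\q_H$ and whose $y$-coordinates are obtained from the $y$-coordinates of $\q_H$ by inserting the $r$ extra eigenvalues coming from the $U(r)$-block of $\r$ in the position dictated by $\om_H$ (the $2pr$ roots in $\p\cap\r$ must all be positive for $H_G$). A direct computation with the formulas $\alpha_G(k)=|\{l\,:\,x_k>y_{q+r+1-l}\}|$, $\beta_G(k)=|\{l\,:\,x_k\ge y_{q+r+1-l}\}|$ then shows $(\alpha_G,\beta_G)=((i+r)^p,(q-j)^p)$ as a rectangle, so that $V(\q_G)=V^G_{i+r,j+r}$ by the classification in Proposition \ref{prop2.1} and the definition of $V^G_{i+r,j+r}$. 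Moreover, since $\p\cap\u_{G}=(\p\cap\u_H)\oplus(\p\cap\r)$ as $\t$-modules by construction (the roots contributing are precisely those of $\u_H$ together with all $2pr$ roots in $\p\cap\r$, all now strictly positive on $H_G$), the top exterior power satisfies
\[
\textstyle\bigwedge^{R(\q_G)}(\p\cap\u_G)\ =\ \Big(\bigwedge^{R(\q_H)}(\p\cap\u_H)\Big)\wedge\om_H,
\]
up to sign, and the right-hand side is exactly the image of the generator of $V^H_{i,j}\otimes\bigwedge^{rp,rp}\p\cap\r$. Hence the generator maps into $V^G_{i+r,j+r}$, and since the wedge map is $K\cap\Hn$-equivariant and $V^H_{i,j}\otimes\bigwedge^{rp,rp}\p\cap\r$ is generated over $K\cap\Hn$ by this vector, the whole image lies in $V^G_{i+r,j+r}$.

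The step I expect to be the main obstacle is verifying that the "stacking" really produces a \emph{theta stable parabolic whose partition datum is the rectangle $(i+r,j+r)$}, i.e. that the interleaving of the $r$ extra $y$-eigenvalues can be arranged to be simultaneously (a) dominant, (b) such that all $2pr$ roots of $\p\cap\r$ are strictly positive, and (c) compatible in the sense of Proposition \ref{prop2.1}; the bound $i+j+r\le q$ is exactly what guarantees enough room between the relevant $x$- and $y$-values for such an interleaving to exist, and this is where that hypothesis gets used. Once the combinatorics of the Young diagrams is pinned down, the rest is the formal wedge-product identity above plus equivariance. An alternative, if one prefers to avoid the explicit parabolic, is to argue representation-theoretically: $\bigwedge^{rp,rp}\p\cap\r$ contains $V^G_{r,r}$ with multiplicity one (Lemma \ref{lemmVI2}), Theorem \ref{theoVI3} identifies $V^H_{i,j}$ inside $V^G_{i,j}$, and one invokes the known decomposition of $V^G_{i,j}\otimes V^G_{r,r}$ (a Littlewood--Richardson computation for the rectangular $K$-types of $U(p,q+r)$) to see that $V^G_{i+r,j+r}$ is the unique constituent of correct bidegree in the image; but this essentially repackages the same rectangle combinatorics.
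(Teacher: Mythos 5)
The paper itself does not prove this lemma: it is quoted verbatim from Bergeron \cite[lemma 3.15]{B2}, so there is no internal proof to compare your argument against. Your reduction step, however, is sound and worth keeping: since $\bigwedge^{2rp}\p\cap\r$ is the line spanned by $\om_H$, which is fixed by the connected group $K\cap\Hn=U(p)\times U(q)$ (it acts on $\p_0\cap\r_0$ by orientation-preserving isometries), and since $V^H_{i,j}$ is the irreducible $K\cap\Hn$-module generated by the Vogan--Zuckerman vector $e(\q_H)=\bigwedge^{R(\q_H)}(\p\cap\u_H)$, it does suffice to prove that the single vector $e(\q_H)\wedge\om_H$ lies in the $K$-stable subspace $V^G_{i+r,j+r}$.

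The construction you propose for that key membership cannot work. Writing the roots of $\t$ in $\p$ as $\pm(x_k-y_l)$, the subspace $\p\cap\r$ is spanned by the root spaces for $x_k-y_l$ \emph{and} for $y_l-x_k$ with $1\le k\le p$ and $q+1\le l\le q+r$: these $2pr$ roots come in $pr$ mutually opposite pairs. Hence no $H_G\in\imath\t_0$ can make all of them strictly positive, so there is no theta stable parabolic $\q_G=\q(H_G)$ with $\p\cap\r\subset\u(H_G)$, and a fortiori none with $\bigwedge^{R(\q_G)}(\p\cap\u_G)=e(\q_H)\wedge\om_H$. One can also see the failure intrinsically: for any parabolic whose $K$-type is $V^G_{i+r,j+r}$, the set of $y$-indices contributing to the $(1,0)$-part of the generator is disjoint from the set contributing to the $(0,1)$-part (a given $y_l$ cannot be both $<x_k$ and $>x_k$), whereas in $e(\q_H)\wedge\om_H$ the $r$ extra indices occur in both parts. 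So $e(\q_H)\wedge\om_H$ is at best a non-extremal weight vector of $V^G_{i+r,j+r}$, and the assertion that it lies there is exactly the nontrivial content of the lemma; establishing it requires an actual analysis of the $K$-type decomposition of $\bigwedge^{*}\p$ (e.g.\ the $U(p)\times U(q+r)$ skew duality computations carried out in \cite{B2}), not an identification with a highest weight vector. Your fallback sketch does not close the gap either: Lemma \ref{lemmVI2} says only that $\om_H$ has nonzero projection onto $V^G_{r,r}$, not that $\om_H$ lies in $V^G_{r,r}$, and a containment statement must control the wedge of $V^H_{i,j}$ with the components of $\om_H$ orthogonal to $V^G_{r,r}$ as well --- which is precisely what a projection or nonvanishing argument cannot do.
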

A somewhat new lemma on the exterior algebra of $\p$ is the following.

\begin{lemm}\label{lemmVI4}
Let $i,j$ be two non-negative integers such that $i+j+r\leq q$. Considering
the application

$$T:\bigwedge^{ip,jp}\p\otimes
\bigwedge^{rp,rp}\p\stackrel{\wedge}{\rightarrow}
\bigwedge^{(i+r)p,(j+r)p}\p{\rightarrow} V^G_{i+r,j+r}$$
then
$$T(V^G_{i,j}\otimes\bigwedge^{2rp}\p)=T(V^G_{i,j}\otimes V^G_{r,r}).$$
\end{lemm}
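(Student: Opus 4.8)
The plan is to analyze the map $T$ by breaking the second tensor factor $\bigwedge^{2rp}\p$ into $K$-isotypic pieces and showing that the only piece contributing to the image of $V^G_{i,j}\otimes(-)$ under $T$ is the one corresponding to $V^G_{r,r}$. First I would recall from Lemma \ref{lemmVI2} and the surrounding discussion that $\bigwedge^{rp,rp}\p$ (more precisely $\bigwedge^{2rp}\p$ in bidegree $(rp,rp)$) contains $V^G_{r,r}$ as a $K$-submodule, and note that $T$ restricted to $V^G_{i,j}\otimes V^G_{r,r}$ lands in $V^G_{i+r,j+r}$ — which is already the whole target, so the inclusion $\supseteq$ is the direction that needs genuine work only insofar as one must check it is nonzero, but the real content is the inclusion $T(V^G_{i,j}\otimes\bigwedge^{2rp}\p)\subseteq T(V^G_{i,j}\otimes V^G_{r,r})$.

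For that inclusion, I would decompose $\bigwedge^{2rp}\p = \bigoplus_S W_S$ into irreducible $K$-types (indexed by the combinatorial data classifying $K$-types of $\bigwedge^*\p$, i.e. pairs of partitions as in Proposition \ref{prop2.1}, together with multiplicities). For each summand $W_S$ one gets a $K$-equivariant map $V^G_{i,j}\otimes W_S \to V^G_{i+r,j+r}$. The key step is to show this map is zero unless $W_S = V^G_{r,r}$. I would argue this via infinitesimal-character / highest-weight bookkeeping: $V^G_{i+r,j+r}$ is the $K$-type $V_{((i+r)^p,(q+r-j-r)^p)} = V_{((i+r)^p,(q-j)^p)}$ inside $\bigwedge^{(i+r)p,(j+r)p}\p$, it is a highest-weight $K$-type there (Vogan–Zuckerman), and its highest weight is the sum of the highest weight of $V^G_{i,j}=V_{(i^p,(q-j)^p)}$ and the highest weight of $V^G_{r,r}=V_{(r^p,(q-r)^p)}$, viewed appropriately; hence any $K$-equivariant projection of $V^G_{i,j}\otimes W_S$ onto $V^G_{i+r,j+r}$ forces the highest weight of $W_S$ to be exactly the highest weight of $V^G_{r,r}$, so $W_S\cong V^G_{r,r}$. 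One then has to handle multiplicities: the $K$-type $V^G_{r,r}$ may occur in $\bigwedge^{rp,rp}\p$ with multiplicity greater than one, so I would argue that all occurrences are "equivalent" for the purpose of $T$ — either because $T$ kills all but the canonical copy (the one generated by $\omega_H^{\mathrm{prim}}$-type vectors), or because the Cartan component of the tensor product $V^G_{i,j}\otimes V^G_{r,r}$ (the summand isomorphic to $V^G_{i+r,j+r}$) has multiplicity one, so the image of $V^G_{i,j}\otimes(\text{any copy of }V^G_{r,r})$ is the same subspace.

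I expect the main obstacle to be precisely this multiplicity issue together with checking that the relevant Cartan-component map does not vanish. Proving nonvanishing amounts to exhibiting an explicit tensor — the product of $\bigwedge^{R}(\p\cap\u)$-type extremal vectors for the two $\theta$-stable parabolics involved — and checking its image under $\wedge$ then projection to $V^G_{i+r,j+r}$ is the extremal vector there; this is essentially the computation underlying Lemma \ref{lemmVI3} combined with Vogan–Zuckerman's description of highest-weight vectors, and is the point where one must be careful that the constraint $i+j+r\le q$ (so that $(i+r,j+r)$ still indexes a valid compatible pair of partitions inside $p\times(q+r)$) is used. A clean alternative to the weight bookkeeping, which I would pursue if the direct approach gets messy, is to use Lemma \ref{lemmVI3} to see that $T(V^H_{i,j}\otimes\bigwedge^{2rp}\p\cap\r)\subseteq V^G_{i+r,j+r}$ and then combine the first point of Theorem \ref{theoVI3} (which puts $V^H_{i,j}$ inside $V^G_{i,j}$) with $K$-equivariance to reduce the general statement to this geometric sub-case, invoking Lemma \ref{lemmVI2} to guarantee the $V^G_{r,r}$-component of $\omega_H$ is nonzero and hence that $T(V^G_{i,j}\otimes V^G_{r,r})$ is all of the image.
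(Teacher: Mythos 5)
Your reduction of the lemma to the inclusion $T(V^G_{i,j}\otimes\bigwedge^{2rp}\p)\subseteq T(V^G_{i,j}\otimes V^G_{r,r})$ is correct, and framing everything in terms of which $K$-isotypic pieces of $\bigwedge^{rp,rp}\p$ the map $T$ kills is sensible. But your key step fails. From the fact that the highest weight of $V^G_{i+r,j+r}$ is the sum of those of $V^G_{i,j}$ and $V^G_{r,r}$ you cannot conclude that $\Hom_{K}(V^G_{i,j}\otimes W_S,\,V^G_{i+r,j+r})=0$ unless $W_S\cong V^G_{r,r}$: a constituent $V_\nu$ of a tensor product $V_\lambda\otimes V_\mu$ need not satisfy $\nu=\lambda+\mu$ (already for $SU(2)$ one has $V_2\subset V_2\otimes V_2$), so $V^G_{i+r,j+r}$ could perfectly well occur in $V^G_{i,j}\otimes W_S$ for $K$-types $W_S$ of $\bigwedge^{rp,rp}\p$ whose highest weight strictly dominates that of $V^G_{r,r}$. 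Your fallback argument is circular for the same reason: Lemma \ref{lemmVI3} gives $T(v\otimes\omega_H)\neq 0$ for suitable $v\in V^H_{i,j}$, and Lemma \ref{lemmVI2} gives $\omega_H^{\mathrm{prim}}\neq 0$, but to deduce $T(v\otimes\omega_H^{\mathrm{prim}})\neq 0$ you would need to know that the component of $\omega_H$ orthogonal to $V^G_{r,r}$ contributes nothing under $T$ --- which is exactly the statement being proved. (Note also that the nonvanishing of $T$ on $V^G_{i,j}\otimes V^G_{r,r}$ is the first point of Theorem \ref{theoVI4}, which the paper deduces \emph{from} this lemma, so it cannot be taken as an input.)

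The paper's proof goes a genuinely different way: it does not decompose $\bigwedge^{rp,rp}\p$ into $K$-isotypic pieces at all, but factors the exterior multiplication through the Hodge bigrading, studying the projections of $V^G_{i,j}\otimes\bigwedge^{*,0}\p$ and $V^G_{i,j}\otimes\bigwedge^{0,*}\p$ onto the Vogan--Zuckerman $K$-types as in the proof of Theorem 29 (in particular equation (39)) of \cite{B4}. That Lefschetz-type analysis of wedging by $(1,0)$- and $(0,1)$-forms is the missing ingredient; some argument of this kind, or an explicit Littlewood--Richardson computation ruling out the other constituents of $\bigwedge^{rp,rp}\p$, is needed to close the gap.
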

\begin{proof}
Considering the projections of $V_{i,j}^G\otimes\bigwedge^{*,0}\p$ and
$V_{i,j}^G\otimes\bigwedge^{0,*}\p$ on $V_{i+r,j+r}^G$ as in the proof of
theorem 29 (see in particular equation (39)) of Nicolas Bergeron's article
\cite{B4}, one can obtain the lemma.
\end{proof}

Finally, one has the following theorem (the first point can be
deduced from the lemma \ref{lemmVI3} and \ref{lemmVI4}).

\begin{theo}[\cite{B2} theorem 5.8]\label{theoVI4} Let $i,j$ be two natural integers such that
 $i+j+r\leq q$. One has the two following properties:
\begin{itemize}
\item 
 the application

$$V^G_{i,j}\otimes V^G_{r,r}\subset \bigwedge^{ip,jp}\p\otimes
\bigwedge^{rp,rp}\p\rightarrow \bigwedge^{(i+r)p,(j+r)p}\p{\rightarrow} V^G_{i+r,j+r}$$
is non zero and
\item
 there
exists a non zero orthogonal and $\Gn$-equivariant projection
$$A_{r,r}^G\otimes A_{i,j}^G\rightarrow A_{i+r,j+r}^G,$$
that lifts the projection
$$V^G_{i,j}\otimes V^G_{r,r}\rightarrow V^G_{i+r,j+r}$$
defined in the previous point.
\end{itemize}
\end{theo}

\section{Geometry}\label{sect3}
\subsection{Matsushima formula}
One considers in this paragraph the case of $\Gn=U(a,b)$. Let $\Omega$ be
the Casimir element of $\g$, it is an element of the center
of the envelopping algebra of $\g$. Let $\Ga$ be a discrete subgroup of
$\Gn$  that acts freely and properly on $X_G$. 
\begin{defi}
Let $M$ be a manifold. We will write $A^i(M)$ for the space of smooth differential forms of degree $i$ on  $M$.
\end{defi}
Using translation by $\Gn$, one can see that
\begin{eqnarray}\label{eq3.1}
A^i(S(\Ga))=\Hom_{K}(\bigwedge^i\p,C^\infty(\Ga\bk\Gn)).
\end{eqnarray}
Let us recall that Kuga's lemma says that the action of the Laplacian on the left hand side of
the equation (\ref{eq3.1}) is the same that minus the action of the Casimir
element on the right hand side of equation (\ref{eq3.1}). Let
$\mathcal{H}_2^*(S(\Ga))$ be the space of square-integrable harmonic
forms of degree $*$. Since an harmonic form on $S(\Ga)$ is closed and smooth, one has a
natural application
$$\mathcal{H}_2^*(S(\Ga))\rightarrow H^*(S(\Ga)).$$
Because of Hodge theory, this application is an isomorphism as soon as the
manifold $S(\Ga)$ is compact.

\begin{defi}
Let $\pi$ be a unitary irreducible representation of $\Gn$, by Schur lemma, the Casimir element acts on
$\pi$ by a constant that one writes $\pi(\Omega)$. 
\end{defi}
One writes $L^2(\Gamma\bk G)$ for the unitary representation of $G$
that consists of square-integrable functions on the quotient $\Ga\bk\Gn$. Let $L^2_d(\Gamma\bk \Gn)$ be the discrete spectrum of $L^2(\Gamma\bk\Gn)$ and
${L^2_d(\Gamma\bk \Gn)}^{\Omega=0}$ be
the part of the discrete spectrum on which the Casimir element acts
trivialy. Because of a result of Tong and Wang  \cite[lemme 3.8]{TW}, a
$L^2$-harmonic differential  form on $S_G(\Gamma)$ takes values in the
discrete spectrum. So

\begin{equation}\label{eq2.3}
\mathcal{H}^*_2(S(\Ga))=\Hom_{K}\left(\bigwedge^i\p, L^2_d(\Gamma\bk\Gn)^{\Omega=0}\right).
\end{equation}

This lead to the following definition.
\begin{defi}
A unitary representation $\pi$ of $\Gn$ is called cohomological if:
\begin{itemize}
\item $\pi(\Omega)=0$ and
\item $\Hom_{K}(\bigwedge^*\p,\pi)\neq 0.$
\end{itemize}
\end{defi}
The representations $A_\q$ are clearly cohomological by theorem
\ref{theo2.0}. Indeed Vogan and Zuckermann proved the following theorem.

\begin{theo}[\cite{VZ} theorem 2.5]
Every cohomological representation of $U(a,b)$ is isomorphic to a representation
$A_{\alpha,\beta}^{U(a,b)}$ for $(\alpha,\beta)$ a certain pair of compatible partitions of
$a\times b$. Furthermore if $(\q,L)$ is a parabolic theta stable algebra then
$$\hom_K(\bigwedge^i\p,A_\q)=\hom_K(\bigwedge^i\p,V(\q))=\hom_{K\cap L}(\bigwedge^{i-R(\q)}\p\cap\l,1).$$
\end{theo}

\begin{defi}\label{defi2.3}

 Let  $\q$ be a parabolic theta stable algebra and
  $(\alpha,\beta)$ be the partitions associated to $\q$. A
  square-integrable harmonic differential form on $S(\Ga)$ of degree
  $R(\q)$ is called highly primitive of type $(\alpha,\beta)$ if it is zero
  on 
$\left(V_{\alpha,\beta}^G\right)^\perp$. In the case where
  $(\alpha,\beta)=(i^a,(b-j)^a)$, one will simply say call these classs highly primitive of type $(i,j)$. One writes $H^{ia,ja}(S(\Ga))_{i,j}$ for
  the space of highly primitive forms of type $(i,j)$. 
\end{defi}

More generally, one can define projection to space of highly primitve cohomology.

\begin{defi}\label{defi2.4}
Let $V\subset \bigwedge^k\p$ be a linear $K$-invariant subspace. Let
$V^\perp$ be the orthogonal complement of $V$ in $\bigwedge^k\p$. One has a
the following decomposition
$$A^k(M)=\Hom_K(V,C^\infty(\Gamma\bk \Gn))\oplus \Hom(V^\perp,C^\infty(\Gamma\bk \Gn)).$$
Let $\eta$ be in $A^k(M)$. We will write $\eta_V$ for the projection of $\eta$ on the first factor. If
$V=V_{i,j}^G$, we will just write  $\eta_{i,j}$.
\end{defi}

\begin{rema}\label{rema2.2}
One can consider the exterior product
\begin{equation}\label{wedge}
\bigwedge^k\p^*\otimes\bigwedge^l\p^*\stackrel{\wedge}{\rightarrow}
\bigwedge^{k+l}\p^*,
\end{equation}
and since $\left(\bigwedge^{k}\p^*\right)^*=\bigwedge^{k}(\p)$ by dualizing
this map, one obtains an application
$$\wedge^*:\bigwedge^{k+l}\p{\rightarrow}\bigwedge^k\p\otimes\bigwedge^l\p.$$
 The cup-product of
two differential forms $\eta$ and $\eta'$ of degree respectively $k$ and
$l$ viewed as elements of
$$\Hom_K(\bigwedge^*\p,C^\infty(\Lambda\bk \Gn))\ *=k\text{ ou }l$$
by equation (\ref{eq3.1}) is the element
$$\eta\wedge\eta'\in\Hom_K(\bigwedge^{k+l}\p,C^\infty(\Lambda\bk \Gn))$$
defined by
$$\eta\wedge\eta'(v)(g)=(\eta\otimes\eta')(\wedge^*v)(g).$$
\end{rema}

\subsection{Proof of theorem \ref{theo0.2}}
Let $\Lambda$ be a discrete cocompact subgroup of $\Hn$. One writes  $F$
for the manifold $S_H(\Lambda)$, $M$ for the manifold
$S_G(\Lambda)$ and $R=(i+j)p$. Let $\eta$ be a harmonic differential form of
degree $R$ on $F$ which is supposed highly primitive of type $(i,j)$ . The differential form $j_*\eta$
(defined by equation (\ref{eq0.1})) is the unique smooth form on $M$ of
type $(i+r,j+r)$ such that: $j_*\eta$ is harmonic,  square-integrable and
such that
 for all form $\psi$ of type $(i+r,j+r)$
\begin{equation*}
\int_{M}j_*\eta\wedge *\psi=\int_{F}\eta\wedge*\psi.
\end{equation*}
As explained in the introduction, we start by lifting the form $\eta$ to a
 form on $M$.

\begin{lemm}\label{lemmrel}
There exists a form $p^*{\eta}$ on $M$ of degree $R$ such that
\begin{itemize}
\item $p^*{\eta}$ is smooth harmonic and highly primitive of type $(i,j)$ 
\item and $\left(p^*{\eta}\right)_{|F}  =  \eta$.

\end{itemize}
\end{lemm}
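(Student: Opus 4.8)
The plan is to construct $p^*\eta$ directly at the level of the Matsushima description of differential forms, exploiting the $\Hn$-equivariant lifting provided by Theorem \ref{theoVI3}. Recall from equation (\ref{eq3.1}) that a harmonic form of degree $R = (i+j)p$ on $F = S_H(\Lambda)$ which is highly primitive of type $(i,j)$ corresponds to an element of $\Hom_{K\cap\Hn}(V^H_{i,j}, L^2_d(\Lambda\bk\Hn)^{\Omega=0})$, and by the last displayed formula of Vogan--Zuckerman this means $\eta$ factors through the $\Hn$-isotypic component on which $A^H_{i,j}$ occurs; concretely $\eta$ determines, for each copy of $A^H_{i,j}$ in $L^2_d(\Lambda\bk\Hn)$, a $K\cap\Hn$-equivariant map $V^H_{i,j}\to A^H_{i,j}$. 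The idea is: first restrict the square-integrable functions on $M$ to the submanifold $F$ (i.e.\ use the projection $p:M\to F$ to pull functions back), and then use the second point of Theorem \ref{theoVI3}, namely the $\Hn$-equivariant inclusion $A^H_{i,j}\hookrightarrow A^G_{i,j}$, to turn $\eta$ into an element of $\Hom_K(V^G_{i,j}, C^\infty(\Lambda\bk\Gn))$.

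More precisely, first I would note that the projection $p:M\to F$, being the normal-bundle projection, induces on the level of group quotients a map $\Lambda\bk\Gn \to \Lambda\bk\Hn$ whose fibers are the symmetric space directions transverse to $X_H$ inside $X_G$; composing a function on $\Lambda\bk\Hn$ with this map gives a smooth function on $\Lambda\bk\Gn$ that is constant along the transverse directions. Applying this to the components of $\eta$ in $L^2_d(\Lambda\bk\Hn)^{\Omega=0}$ and combining with the $K$-equivariant inclusion $V^G_{i,j}\subset\bigwedge^{ip,jp}\p$ of Theorem \ref{theoVI3} (first point), together with the lifting $A^H_{i,j}\to A^G_{i,j}$ of its second point, produces a well-defined element $p^*\eta\in\Hom_K(V^G_{i,j}, C^\infty(\Lambda\bk\Gn))$, hence a smooth form on $M$ of degree $R$ which is highly primitive of type $(i,j)$ by construction. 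The restriction property $(p^*\eta)_{|F} = \eta$ should then be immediate from the compatibility of the inclusions $V^H_{i,j}\subset V^G_{i,j}$ and $A^H_{i,j}\subset A^G_{i,j}$ with the geometric restriction map, i.e.\ from the fact that the whole construction is engineered so that precomposing with the $K\cap\Hn$-equivariant inclusions recovers the original data on $F$.

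The point that requires care, and which I expect to be the main obstacle, is \emph{harmonicity} of $p^*\eta$ on the non-compact manifold $M$. Constancy along the transverse fiber directions is what makes this work: the Casimir element $\Omega_G$ of $\g$ decomposes relative to $\h$ using the orthogonal decomposition $\p = \h\cap\p \oplus \q\cap\r$, and on a function pulled back from $\Lambda\bk\Hn$ the contributions of the $\r$-directions act trivially (the function does not depend on those variables), so $\Omega_G$ acts as $\Omega_H$ does, giving $\Omega_G\cdot p^*\eta = 0$; by Kuga's lemma this is harmonicity. One must check, however, that this pulled-back object really lands in the discrete spectrum or at least defines a genuine harmonic $L^2$-form when paired with the $K$-module $V^G_{i,j}$ as in equation (\ref{eq2.3}) --- and here the subtlety is that $p^*\eta$ itself need \emph{not} be square-integrable on $M$ (the fibers are non-compact when $r\neq 0$); it is only required to be harmonic and highly primitive, with square-integrability reserved for the final product $j_*\eta = p^*\eta\wedge[S_H(\Lambda)]$, the Thom form $[S_H(\Lambda)]$ supplying the decay. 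So the lemma as stated asks only for the two bulleted properties, and the proof should be the representation-theoretic construction above followed by the Casimir computation; I would organize it as: (1) reinterpret $\eta$ via Matsushima and Vogan--Zuckerman, (2) pull back functions along $p$, (3) apply Theorem \ref{theoVI3}, (4) verify the $K$-type support gives high primitivity of type $(i,j)$, (5) compute $\Omega_G\cdot p^*\eta = 0$ using $\p = \h\cap\p\oplus\q\cap\r$ and Kuga's lemma, (6) read off $(p^*\eta)_{|F}=\eta$ from the compatibility of the inclusions.
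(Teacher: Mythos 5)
You correctly identify the key input (the $\Hn$-equivariant map between $A^H_{i,j}$ and $A^G_{i,j}$ from the second point of Theorem \ref{theoVI3}) and you are right that $p^*\eta$ need not be square-integrable; but the core of your construction --- ``compose functions on $\Lambda\bk\Hn$ with a projection $\Lambda\bk\Gn\to\Lambda\bk\Hn$ so that they become constant along the transverse directions'' --- does not work. First, no such map of group quotients exists: the normal-bundle projection lives at the level of $\Lambda\bk X_G\to \Lambda\bk X_H$, and lifting it to $\Lambda\bk\Gn\to\Lambda\bk\Hn$ would require a $\Lambda$-equivariant retraction $\Gn\to\Hn$ compatible with the right $K$-action, which is not available; the components of $\eta$ are not right $K\cap\Hn$-invariant, so they cannot be composed with the symmetric-space projection either. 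Second, and more seriously, your harmonicity argument is false: writing $\Omega_G=\Omega_H+\sum Y_jY^j$ over a basis of $\r$, the second-order operators $Y_jY^j$ do \emph{not} annihilate a function merely because it is ``constant along the fibers'' --- the left-invariant vector fields $Y_j$ do not coincide with the fiber directions of $p$ away from $F$ (this is exactly where the curvature of $X_G$ enters), and this is precisely why the naive geometric pullback $p^*\eta$ of a harmonic form along the normal-bundle projection fails to be harmonic. If the naive pullback worked, the representation-theoretic machinery of the paper would be unnecessary.

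The paper's mechanism is different and is the missing idea. The form $\eta$, being highly primitive, generates an $\Hn$-equivariant embedding $\iota:A^H_{i,j}\hookrightarrow L^2(\Lambda\bk\Hn)$; taking the adjoint of the inclusion of Theorem \ref{theoVI3} gives an $\Hn$-equivariant projection $P:A^G_{i,j}\to A^H_{i,j}$, and one sets
$$p^*\eta(v)(g)=\iota\bigl(P(A(g)v)\bigr)(1),\qquad v\in V^G_{i,j},\ g\in\Gn,$$
where $A$ is the action of $\Gn$ on the \emph{abstract} module $A^G_{i,j}$. The point is that these functions lie in the image of the $\Gn$-equivariant map $A^{G,\infty}_{i,j}\to C^\infty(\Lambda\bk\Gn)$, $v\mapsto\bigl(g\mapsto \iota(P(A(g)v))(1)\bigr)$; since $A^G_{i,j}$ has trivial infinitesimal character, the Casimir annihilates them, and Kuga's lemma gives harmonicity --- no decomposition of $\Omega_G$ along $\p=\h\cap\p\oplus\r\cap\p$ is needed. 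High primitivity is automatic because $p^*\eta$ is defined on $V^G_{i,j}$ and extended by zero, and the restriction identity follows from the computation $p^*\eta(v)(h)=\iota(P(A(h)v))(1)=\iota(P(v))(h)=\eta(v)(h)$ for $v\in\bigwedge^R(\p\cap\h)$. Your step (5) would need to be replaced by this equivariance argument for the proof to go through.
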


\begin{proof}
Since the form $\eta$ is highly primitive, it  generates in $L^2(\Lambda\bk \Hn)$
under the action of
$\Hn$ an inclusion $\iota:A_{i,j}^H{\subset} L^{2}(\Lambda\backslash H)$. Let $P$ be the  $\Hn$-equivariant projection
\begin{equation}\label{defP}
A_{i,j}^G\rightarrow A_{i,j}^H,
\end{equation}
given by the second point of theorem \ref{theoVI3}. Let $A$ be the action of $\Gn$ on the
representation $A^G_{i,j}$. Let
$v\in\bigwedge^{R}\mathfrak{p}$ and $g\in \Gn$, the vector $A(g)v$ is a
smooth vector of $A_{i,j}^G$ thus $P(A(g)v)$ is also a smooth vector of 
$A_{i,j}^H$. As a consequence, for all $g\in \Gn$,
$\iota\left(P(A(g)v)\right)$ is a smooth function on $\Lambda\bk \Hn$. We define an element 
$$p^*\eta\in\Hom_{K}\left(V_{i,j}^G, C^\infty(\Lambda\bk \Gn)\right)$$
by the formula
$$p^*{\eta}(v)(g)=P(A(g)v)(1).$$
Using equation (\ref{eq3.1}), one can see that $p^*\eta$  is a smooth
differential form of degree $R$ on $F$. Let us show that $p^*\eta$ verifies the two properties of the
lemma. Since the application
$$\left\{\begin{array}{ccc}
A_{i,j}^{G,\infty}&\rightarrow&C^\infty(\Lambda\bk G)\\
v&\mapsto& P(A(g)v)(1)
\end{array}\right.
$$
is $\Gn$-equivariant and that the functions  $p^*{\eta}(v)$ (for
$v\in\bigwedge^R\p$) are contained
in its image, the action of the Casimir element on these
functions is zero. Then, using Kuga lemma, $p^*\eta$ is harmonic and highly
primitive of type
$(i,j)$. It remains to compute the restriction of $p^*\eta$ to $F$. Let $v\in \bigwedge^R\left(\p\cap\h\right)$ then
\begin{eqnarray*}
p^*{\eta}(v)(h)  =  \iota\left(P(A(h)v)\right)(1)
= \iota(P(v))(h)
 =  \eta(v)(h).
\end{eqnarray*}
That way, the lemma is proven.
\end{proof}

Let $\omega$ be the $L^2$ and harmonic differential form on $M$ of degree
$2pr$ dual to $F$. Let $\psi$ be a $L^2$ and harmonic differential form on
$M$ of degree $2pr$, we have
\begin{eqnarray}\label{cupP}
\int_M\omega\wedge*\psi=\int_F*\psi,
\end{eqnarray}
thus, $\omega$ is left invariant by $\Hn$.

\begin{theo}\label{theo3.1}
One has $j_*\eta=(\omega\wedge p^*{\eta})_{i+r,j+r}$ and $j_*\eta$ is non zero.
\end{theo}

\begin{rema}
This theorem implies the theorem \ref{theo0.2}. 
\end{rema}

\begin{proof}

Let us define 
\begin{equation*}
\varphi=(\omega\wedge p^*{\eta})_{i+r,j+r}
\end{equation*}
One has to prove that the form $\varphi$ is: non zero, square-integrable, harmonic and
equal to $j_*\eta$. By now the only thing we
know about $\varphi$ is that this form is closed.

By lemma \ref{lemmVI4}, we have 
\begin{equation*}
\varphi= (\omega_{r,r}\wedge p^*{\eta})_{i+r,j+r}
\end{equation*}

The form $\omega_{r,r}$ is an element of
$$\Hom_K(V_{r,r}^G,L^2_d(\Gamma\bk \Gn)^{\Omega=0})$$
which is left $\Hn$-invariant.  Since it has been assumed that the space $\Lambda\bk \Hn$ is compact, the form $\omega_{r,r}$ is also an
element of
$$\Hom_K(V^G_{r,r},L^2_d(\Hn\bk \Gn)^{\Omega=0}).$$
Thus it generates under the action of $\Gn$ an inclusion
$$ A_{r,r}^G\subset L^2(\Hn\bk \Gn).$$

Using remark \ref{rema2.2}, one can explain the building of $\varphi$ from the point
of view of representation theory. The form $\varphi$ is obtained by the composition of the two
following applications:  

$$\bigwedge^{2rp+R}\rightarrow V^G_{i+r,j+r}\stackrel{\wedge^*}{\rightarrow}
\bigwedge^{2pr}\p\otimes \bigwedge^{R}\p\rightarrow A_{r,r}^G\otimes
A_{i,j}^G,$$
and
$$
A_{r,r}^G\otimes A_{i,j}^G\subset L^2(\Hn\bk \Gn)\otimes 
A_{i,j}^G\subset \text{ind}_\Hn^\Gn\left( {A_{i,j}^G}_{|\Hn}\right)\stackrel{P}{\rightarrow}
\text{ind}_\Hn^\Gn A_{i,j}^H\subset L^2(\Lambda\bk \Gn).$$

It simply means that the functions in  $L^2(\Lambda\bk \Gn)$ defining
$\varphi$ are linear combinations of functions of the form
$$\omega_{r,r}(v)(g)P(A(g)v')(1),$$
with $v\in V^G_{r,r}$ and $v'\in V^G_{i,j}$ (the projection $P$ is defined
by equation (\ref{defP})). Therefore $\varphi$ is
square-integrable. Dualizing the projection of theorem \ref{theoVI4}, one
finds an inclusion
$$A_{i+r,j+r}^G\subset A_{r,r}^G\otimes A_{i,j}^G,$$
lifting the natural and non zero $K$-equivariant application
$$V^G_{i+r,j+r}\subset
\bigwedge^{R+2pr}\p\stackrel{\wedge^*}{\rightarrow}\bigwedge^R\p
\otimes\bigwedge^{2pr}\p\rightarrow V^G_{i,j}\otimes V^G_{r,r}.$$
Since the second application defining $\varphi$ is $\Gn$-equivariant, one deduces
from Kuga lemma that $\varphi$ is harmonic. 

One can now prove that $\varphi$
is non-zero. An element $\omega_H$ of
the line $\bigwedge^{2pr}\r\cap\p$ was defined in subsection \ref{ssect1.2}
and its projection on $V^G_{r,r}$ was written
$\omega_H^\text{prim}$. Because of \cite[proposition 3.5]{B2}, the element
$$\omega_{r,r}(\omega_H)(1)=\omega_{r,r}(\omega_H^{\text{prim}})(1)$$
is non zero. But by lemma \ref{lemmVI3}, the exterior product induced an inclusion
$$V^H_{i,j}\otimes\bigwedge^{2pq}\p\cap\r\subset
V^G_{i+r,j+r}.$$
Therefore if $v\in V_{i,j}^H$ and $h\in \Hn$
\begin{eqnarray*}
\varphi(\omega_H\otimes v)(h)&=&\left(\omega_{r,r}\wedge p^*\eta\right)(
\omega_H\otimes v)(h)\\
&=& \omega_{r,r}(\om_H^{\text{prim}})(1)\eta(v)(h).
\end{eqnarray*}
Since one can choose $v$ and $h$ such that $\eta(v)(h)\neq 0$, $\varphi$ is
non zero. 

Finally it remains to prove that $\varphi$ is
equal to $j_*\eta$. Let $\psi$ be a form of type
 $V_{i+r,j+r}^G$. Since
$$\int_{M}\varphi\wedge*\psi=\int_{M}\omega\wedge (p^*\eta\wedge *\psi),$$
one could be able to conclude immediatly if the form
 $p^*\eta\wedge*\psi$ was square-integrable but it is not the case. Using
the fact that $\omega$ is $\Hn$-invariant one sees that:
\begin{equation}\label{eqfin}
\begin{split}
\int_{M}\varphi\wedge*\psi&=\int_{\Lambda\bk G}\omega(g)\wedge p^*{\eta}(g)\wedge*\psi(g)(\text{vol})dg\\
&=\int_{\Hn\bk G}\omega(g)\wedge\left[\int_{\Lambda\backslash
\Hn}p^*{\eta}(hg)\wedge *\psi(hg)dh\right] dg\\
&=\frac{1}{\text{vol}(\Lambda\bk\Hn)}\int_M \omega\wedge\int_{\Lambda\backslash
\Hn}p^*{\eta}(h)\wedge *\psi(h)dh.
\end{split}
\end{equation}

\begin{lemm} The differential form on $M$
$$\int_{\Lambda\bk H}p^*{\eta}(h)\wedge *\psi(h)dh$$
is square integrable.
\end{lemm}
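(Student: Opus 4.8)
The plan is to prove the quantitative estimate $\|\Psi\|_{L^2(M)}\le C\,\|\psi\|_{L^2(M)}$ with $C$ independent of $\psi$, where $\Psi:=\int_{\Lambda\backslash\Hn}p^*\eta(h)\wedge *\psi(h)\,dh$; since $\psi$ is square-integrable this gives the lemma at once. Write $|\cdot|$ for the pointwise norm on forms attached to the Riemannian metric. For $g\in\Gn$ one first bounds, using $|\int f|\le\int|f|$ and $|\alpha\wedge\beta|\le c\,|\alpha|\,|\beta|$,
$$|\Psi(g)|\ \le\ c\int_{\Lambda\backslash\Hn}|p^*\eta(hg)|\,|*\psi(hg)|\,dh,$$
and then applies the Cauchy--Schwarz inequality on $\Lambda\backslash\Hn$ to obtain
$$|\Psi(g)|^2\ \le\ c^2\,\Big(\int_{\Lambda\backslash\Hn}|p^*\eta(hg)|^2\,dh\Big)\,\Big(\int_{\Lambda\backslash\Hn}|*\psi(hg)|^2\,dh\Big).$$

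The crucial point — and the step I expect to be the real obstacle — is that the first factor is bounded \emph{uniformly in $g\in\Gn$}, even though $p^*\eta$ is genuinely not square-integrable on $M$. For this one extends the computation at the end of the proof of Lemma \ref{lemmrel}: for $v\in V^G_{i,j}$, $g\in\Gn$ and $h\in\Hn$, the $\Hn$-equivariance of $P$ together with the fact that $\iota$ intertwines the $\Hn$-action on $A_{i,j}^H$ with right translation on $L^2(\Lambda\backslash\Hn)$ give $p^*\eta(v)(hg)=\big[\iota\big(P(A(g)v)\big)\big](h)$; in other words the function $h\mapsto p^*\eta(v)(hg)$ on $\Lambda\backslash\Hn$ is precisely the vector $\iota(P(A(g)v))\in L^2(\Lambda\backslash\Hn)$. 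Summing over an orthonormal basis $(v_\alpha)$ of $V^G_{i,j}$, and using that $\iota$ is a scalar multiple of an isometry, that $P$ is bounded, and that $A(g)$ acts unitarily on $A_{i,j}^G$, one gets
$$\int_{\Lambda\backslash\Hn}|p^*\eta(hg)|^2\,dh\ =\ \sum_\alpha\big\|\iota\big(P(A(g)v_\alpha)\big)\big\|_{L^2(\Lambda\backslash\Hn)}^2\ \le\ c_\iota^2\,\|P\|^2\,\sum_\alpha\|v_\alpha\|_{A_{i,j}^G}^2\ =:\ C_1\ <\ \infty,$$
a finite constant independent of $g$: the last sum is finite because $V^G_{i,j}$ is a finite-dimensional $K$-module on which, by Schur's lemma, the norm from the metric and the norm from $A_{i,j}^G$ differ by a scalar. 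Morally, although $p^*\eta$ has infinite $L^2$-mass on $M$, its restriction to each $\Lambda\backslash\Hn$-orbit has $L^2$-mass at most $C_1$, uniformly — and this uniform control is the one place where the representation-theoretic construction of $p^*\eta$ in Lemma \ref{lemmrel}, and in particular the unitarity of $A_{i,j}^G$, is essential.

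It then only remains to integrate. From the above, $|\Psi(g)|^2\le c^2C_1\,F(g)$ for every $g$, where $F(g):=\int_{\Lambda\backslash\Hn}|*\psi(hg)|^2\,dh$; by right-invariance of the Haar measure on $\Lambda\backslash\Hn$ the function $F$ is left $\Hn$-invariant, so it descends to $\Hn\backslash\Gn$. Since $\Gn$ and $\Hn$ are unimodular, unfolding along the fibration $\Lambda\backslash\Gn\to\Hn\backslash\Gn$ (exactly as in the computation (\ref{eqfin})) yields
$$\|\Psi\|_{L^2(M)}^2\ \le\ c^2C_1\!\int_{\Lambda\backslash\Gn}\!F(g)\,dg\ =\ c^2C_1\,\mathrm{vol}(\Lambda\backslash\Hn)\!\int_{\Hn\backslash\Gn}\!F(g)\,dg\ =\ c^2C_1\,\mathrm{vol}(\Lambda\backslash\Hn)\!\int_{\Lambda\backslash\Gn}\!|*\psi(g)|^2\,dg,$$
which equals $c^2C_1\,\mathrm{vol}(\Lambda\backslash\Hn)\,\|\psi\|_{L^2(M)}^2<\infty$ because $\psi$ is square-integrable and the Hodge star is a pointwise isometry. (Cocompactness of $\Lambda$ in $\Hn$ is used only to ensure $\mathrm{vol}(\Lambda\backslash\Hn)<\infty$ and the availability of the bounded embedding $\iota$; the positive constants coming from the normalizations of the Haar measures — relating $\int_M$ to $\int_{\Lambda\backslash\Gn}$, and so on — are immaterial for finiteness.)
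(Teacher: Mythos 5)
Your proof is correct and follows essentially the same route as the paper: the key step in both is the identity $p^*\eta(v)(hg)=\iota(P(A(g)v))(h)$, giving a bound on the fiberwise $L^2(\Lambda\backslash\Hn)$-norm of $p^*\eta$ that is uniform in $g$ by unitarity of $A(g)$ and boundedness of $P$ (and $\iota$), followed by Cauchy--Schwarz over $\Lambda\backslash\Hn$ and unfolding $\int_{\Hn\backslash\Gn}\int_{\Lambda\backslash\Hn}=\int_{\Lambda\backslash\Gn}$. The only difference is presentational: the paper argues componentwise with fixed $v\in V^G_{i,j}$, $v'\in V^G_{i+r,j+r}$, while you sum over an orthonormal basis to phrase the bound in terms of pointwise norms of forms.
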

\begin{proof}
Let $v\in V^G_{i,j}$ and $v'\in V_{i+r,j+r}^G$, one has to prove that
$$\int_{\Hn\backslash \Gn}\left|\int_{\Lambda\backslash \Hn}P(A(g)v)(hg)\psi(v')(hg)dh\right|^2dg<+\infty$$
 using Cauchy-Schwartz, we find that
\begin{multline}
\int_{\Hn\backslash \Gn}\left|\int_{\Lambda\backslash \Hn}P(A(g)v)(hg)\psi(hg)(v')dh\right|^2dg\\
\leq \int_{\Hn\backslash \Gn}||P(A(gv))||_{\Lambda\bk \Hn}^2\left(\int_{\Lambda\bk \Hn}|\psi(hg)|^2dh\right)dg\\
\leq ||v||^2||\psi||^2,
\end{multline}
\end{proof}

Finally using formulas
(\ref{eqfin}), and the definition of
$\omega$, one finds that
\begin{eqnarray*}
\int_M\varphi\wedge*\psi&=&\frac{1}{\text{vol}(\Lambda\bk\Hn)}\int_{F}\int_{\Lambda\bk
\Hn} p^*{\eta}(h)\wedge *\psi(h)dh\\
&=&\int_F p^*\eta\wedge*\psi\\
&=&\int_F\eta\wedge*\psi.
\end{eqnarray*}
So the theorem \ref{theo3.1} is proven.
\end{proof}

\end{document}